\let\bfseries=\undefined
\DeclareRobustCommand\bfseries
\newcommand{\Z}{{\mathbb Z}}
\newcommand{\R}{\mathbb R}
\DeclareMathOperator{\diam}{diam}
\DeclareMathOperator{\dist}{dist}
\DeclareMathOperator{\supp}{supp}
\def\Graver{{\mathcal G}}
\def\GLP{{\mathcal C}_{\leq}}
\def\Circuits{{\mathcal C}}
\def\ve#1{\mathchoice{\mbox{\boldmath$\displaystyle\bf#1$}}
{\mbox{\boldmath$\textstyle\bf#1$}}
{\mbox{\boldmath$\scriptstyle\bf#1$}}
{\mbox{\boldmath$\scriptscriptstyle\bf#1$}}}
\newcommand\veb{{\ve b}}
\newcommand\vecc{{\ve c}}
\newcommand\veg{{\ve g}}
\newcommand\vel{{\ve l}}
\newcommand\veu{{\ve u}}
\newcommand\vev{{\ve v}}
\newcommand\vew{{\ve w}}
\newcommand\vex{{\ve x}}
\newcommand\vey{{\ve y}}
\newcommand\vez{{\ve z}}
\newcommand{\eoproof}{\hspace*{\fill} $\square$ \vspace{5pt}}
\newcommand{\T}{{\intercal}} 
\newcommand{\DeclareBracket}[3]{
  \newcommand{#1}[2][]{%
  \ifthenelse%
  {\equal{##1}{}}%
  {\left#2##2\right#3}%
  {\csname ##1l\endcsname#2##2\csname ##1r\endcsname#3}}}
\begin{document}

\title{On the circuit diameter of dual transportation polyhedra}

\author{Steffen Borgwardt\inst{1}\and Elisabeth Finhold\inst{2}\and Raymond Hemmecke\inst{3}}


\institute{\email{\href{mailto:borgwardt@ma.tum.de}{borgwardt@ma.tum.de}};
Technische Universit\"at M\"unchen, Germany \and
\email{\href{mailto:finhold@tum.de}{finhold@tum.de}};
Technische Universit\"at M\"unchen, Germany \and
\email{\href{mailto:hemmecke@tum.de}{hemmecke@tum.de}};
Technische Universit\"at M\"unchen, Germany}

\date{\today}

\maketitle

\begin{abstract}
  In this paper we introduce the circuit diameter of polyhedra, which is always bounded from above by the combinatorial diameter. We consider dual transportation polyhedra defined on general bipartite graphs. For complete $M{\times}N$ bipartite graphs the Hirsch bound $(M{-}1)(N{-}1)$ on the \emph{combinatorial diameter} is a known tight bound (Balinski, 1984). For the \emph{circuit diameter} we show the much stronger bound $M{+}N{-}2$ for all dual transportation polyhedra defined on arbitrary bipartite graphs with $M{+}N$ nodes.
\end{abstract}

{\bf{Keywords}:} augmentation, Graver basis, test set, circuit, elementary vector, linear program, integer program, diameter, Hirsch conjecture

\section{Introduction.}

Graver bases of matrices $A\in\Z^{d\times n}$ were introduced by Jack Graver in 1975 in his seminal paper \cite{Graver:75} as sets $\Graver(A)$ of vectors that provide optimality certificates for the family of integer linear programs $\min\set{\,\vecc^\T\vez:A\vez=\veb, \vel\leq\vez\leq\veu, \vez\in\Z^n\,}$ that share the problem matrix $A$ but that may differ in the remaining data $\veb,\vecc,\vel,\veu$. This optimality certificate provided by $\Graver(A)$ allows to augment any given feasible solution to optimality via a simple scheme similar to the Simplex method for linear programs: iteratively augment the given solution along Graver basis directions until a solution is reached that cannot be augmented along a direction from $\Graver(A)$. This solution must be optimal. Clearly, the number of augmentation steps needed heavily depends on how one chooses among several applicable augmenting Graver basis directions. 

In the last $20$ years, a lot of progress has been made on the theory of Graver bases. It has been shown that $\Graver(A)$ also provides optimality certificates for the minimization of separable convex objective functions over the lattice points of a polyhedron \cite{MurotaSaitoWeismantel04}, that at most polynomially many (in the binary encoding length of the input data) Graver-best augmentation steps are needed in order to reach an optimal solution \cite{Hemmecke+Onn+Weismantel:oracle}, and that $N$-fold separable-convex integer linear programs can be solved in polynomial time \cite{DeLoera+Hemmecke+Onn+Weismantel:08,hemmecke-koeppe-weismantel:4-block-proximity,hemmecke-onn-romanchuk:nfold-cubic}. For a more thorough introduction to the theory of Graver bases and for more references on this topic we refer the interested reader to the books \cite{DHKbook,onn:nonlinear-discrete-monograph}.

Note that the notion of a Graver basis can be extended to the continuous setting of linear programs. Here, the  \emph{circuits} or \emph{elementary vectors} $\Circuits(A)$ of $A\in\Z^{d\times n}$ provide a universal optimality certificate similarly as the Graver basis $\Graver(A)$ does for the integer setting. All results readily translate from the integer linear to the linear setting and the proofs are often much simpler.

Very recently, it has been shown in \cite{DeLoera+Hemmecke+Lee:14} that for integer linear programs and for linear programs one needs at most $|\Graver(A)|$ respectively $|\Circuits(A)|$ many steepest-descent Graver basis augmentation steps. This surprising bound does not depend on $\veb$, $\vecc$, $\vel$ and $\veu$ and readily implies that $N$-fold (integer) linear programs can be solved in \emph{strongly} polynomial time. This raises the natural question of how many circuit augmentation steps are needed with a ``perfect'' selection rule? Progress on this question may lead to a strongly polynomial-time algorithm for the solution of general linear programs via circuit augmentations, which would solve a long-standing open question on the complexity of LPs. The search for a best selection rule leads us to a notion similar to the \emph{combinatorial diameter} of a polyhedron, which gives a lower bound for the number of steps needed by the Simplex method to solve an LP. 

In this paper, we introduce the notion of \emph{circuit diameter} of a polyhedron as the maximum number of (maximum length) steps along circuit directions that are needed to go from any vertex of the polyhedron to any other vertex of the polyhedron. From the definition of the circuits it will follow directly that the circuit diameter of a polyhedron is bounded from above by the combinatorial diameter and thus it is natural to ask, whether the \emph{Hirsch bound} (which has been disproved to bound the combinatorial diameter in general \cite{klee67,santos11}) always bounds the circuit diameter of a polyhedron. 

\begin{conjecture}[Circuit diameter bound]\label{Conjecture: Circuit diameter bound}
  For any $n$-dimensional polyhedron with $f$ facets the circuit diameter is bounded above by $f-n$.
\end{conjecture}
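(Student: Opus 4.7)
The natural approach is an induction on the facet--dimension gap $f-n$. Small cases ($f-n\le 1$) are immediate since the polyhedron either has a unique vertex or is a simplex whose vertices are pairwise connected by single circuit moves. For the inductive step, given two distinct vertices $\vev,\vew$ of $P$, the plan is to construct a single maximum-length circuit step from $\vev$ that terminates at a point $\vev'$ lying on a facet $F$ of $P$ that also contains $\vew$. The face $F$ is a polyhedron of dimension at most $n-1$ with at most $f-1$ facets, and the inductive hypothesis applied inside $F$ would give at most $(f-1)-(n-1)=f-n$ further steps from $\vev'$ to $\vew$. This naive count yields $f-n+1$ in total, so the real challenge is to save one step.

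One way to absorb the extra step is to choose the first circuit direction so that $\vev'$ already lies on two facets not containing $\vev$, one of which contains $\vew$, allowing the induction to start from a face of dimension $n-2$ with $f-2$ facets. A more flexible approach is to work directly with a conformal decomposition of $\vew-\vev$ into circuits $\veg_1,\dots,\veg_k$ and to argue that one can reorder and rescale the summands so that every intermediate move has maximum length on $P$ and so that the total number of summands used is at most $f-n$. The number of circuits in a conformal decomposition is a priori bounded only by the support size, hence by $n$, which need not match $f-n$; the target bound would therefore have to come from showing that a maximum-length execution of the moves collapses would-be summands together whenever a partial sum already rides along a facet, producing one move per newly activated facet rather than one per circuit.

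The main obstacle is that this is the circuit analogue of a statement whose combinatorial version is false in general, by Santos' counterexample to the Hirsch conjecture \cite{santos11}. Any proof of the conjecture must therefore genuinely exploit the fact that circuit moves can cross many combinatorial edges of $P$ at once; it cannot be an edge-following argument in disguise. I expect the save-one-step argument sketched above to break down in full generality, which is presumably why the statement appears here as a conjecture and the paper proceeds instead to establish the much stronger bound $M+N-2$ only for the special class of dual transportation polyhedra, where the combinatorial structure of the underlying bipartite graph provides enough rigidity to realize each maximum-length circuit move explicitly.
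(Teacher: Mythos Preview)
The statement is labelled a \emph{conjecture} in the paper, and indeed the paper offers no proof of it; it is posed as an open problem, and the remainder of the paper establishes the bound only for the special family of dual transportation polyhedra. You correctly recognise this in your final paragraph, so in that sense your assessment matches the paper exactly.

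Your sketch of why the obvious inductive approach falls one step short is accurate, and your remark that any genuine proof must exploit the ability of circuit moves to cut through the interior (so that an edge-following argument in disguise cannot work, in light of Santos' counterexample) is well taken. Two further technical obstacles to the face-induction you outline are worth noting. First, the point $\vev'$ reached after a maximum-length circuit step need not be a vertex of the facet $F$ (nor of $P$); circuit walks are defined between vertices, so the inductive hypothesis on $F$ is not even applicable from $\vev'$ as stated. Second, the circuits of $F$ with respect to its own inequality description are in general not the same as the circuits of $P$, so a circuit walk inside $F$ need not be a circuit walk for $P$. Both issues compound the difficulty you already identified and reinforce that no routine induction is available here.
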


It is an immediate interesting open question whether the counterexamples to the Hirsch conjecture \cite{klee67,santos11} give rise to counterexamples to our Conjecture \ref{Conjecture: Circuit diameter bound} or not. 

To bound the combinatorial diameter of a polyhedron it suffices to consider generic polyhedra, as by perturbation any polyhedron can be turned into a generic polyhedron, whose diameter is at least as big as the one of the original polyhedron. It is not clear whether the same is true for the circuit diameter, see the second example presented in the next section.

In this paper, we consider dual transportation polyhedra defined on general bipartite graphs. For 
complete $M{\times}N$ bipartite graphs the Hirsch bound $(M{-}1)(N{-}1)$ on the \emph{combinatorial diameter} has been already proved and shown to be tight \cite{balinski84}. For the \emph{circuit diameter} we show the much stronger bound $M{+}N{-}2$ for all dual transportation polyhedra defined on bipartite graphs with $M+N$ nodes. This shows that there are families of polyhedra whose circuit diameter is much smaller than their combinatorial diameter, which gives hope that an augmentation algorithm along circuit directions could have a much better complexity to solve LPs than the Simplex method.

\section{Circuit distance and circuit diameter}

The \emph{circuits} or \emph{elementary vectors} of a matrix $A\in\Z^{d\times n}$ are the support-minimal elements in $\ker(A)\setminus\set{\,\ve 0\,}$, normalized to (coprime) integer components. Clearly, there are only finitely many such vectors. It can be shown that the set of circuits consists exactly of all edge directions of $\set{\,\vez:A\vez=\veb,\vez\geq\ve 0\,}$ for varying $\veb$ \cite{SturmfelsThomas97}. This also implies that the set of circuits of $A$ provides a universal optimality certificate for linear programs $\min\set{\,\vecc^\T\vez:A\vez=\veb,\vez\geq\ve 0\,}$ for any choice of $\veb$ and $\vecc$; similarly as the Graver basis of $A$ does for the integer setting.

In analogy to this, we define for the linear program $\min\set{\,\vecc^\T\vez:A\vez\leq\veb\,}$ the set of circuits $\GLP(A)$ as the collection of all edge directions of $\set{\,\vez:A\vez\leq\veb\,}$ for varying $\veb$. (Note that the matrix $A\in\Z^{d\times n}$ should have full row rank $n$ for the polytope to have vertices and edges.) It is not hard to show that these edge directions are given by those vectors $\vez\in\R^n\setminus\set{\,\ve 0\,}$ for which $\supp(A\vez)$ is inclusion-minimal among all supports $\supp(A\vex)$, $\vex\in\R^n\setminus\set{\,\ve 0\,}$. It is also not hard to show that $\GLP(A)$ provides augmenting directions to any non-optimal solution of $\min\set{\,\vecc^\T\vez:A\vez\leq\veb\,}$ for any choice of $\veb$ and $\vecc$.

One should note that for a linear program, augmentation along circuit directions is a generalization of the Simplex method: While in the Simplex method one walks only along the $1$-skeleton/edges (so in particular on the boundary) of the polyhedron, the circuit steps are allowed to go through the interior of the polyhedron (along \emph{potential} edge directions). While \cite{Hemmecke+Onn+Weismantel:oracle} states that there is a selection strategy such that only polynomially many circuit augmentation steps are needed to reach an optimal solution (a fact that is still unresolved for the Simplex method), it is still open how to implement this greedy-type augmentation oracle in polynomial time.

Inspired by the surprising bound of at most $|\Circuits(A)|$ circuit augmentations \cite{DeLoera+Hemmecke+Lee:14}, one may wonder if there is a selection strategy such that only a strongly polynomial number (that depends only on $d$ and $n$) of augmentation steps is needed to reach an optimal solution. We will not answer this fundamental question here, but introduce and turn to an intimately related problem. For this, let us define the notions of \emph{circuit distance} and \emph{circuit diameter}. 

\begin{definition}
  Let $P=\set{\,\vez:A\vez\leq\veb\,}$ be a polyhedron. For two vertices $\vev^{(1)},\vev^{(2)}$ of $P$, we call a sequence $\vev^{(1)}=\vey^{(0)},\ldots,\vey^{(k)}=\vev^{(2)}$ a \emph{circuit walk of length $k$} if for all $i=0,\ldots,k-1$ we have
\begin{enumerate}
\item $\vey^{(i)}\in P$,
\item $\vey^{(i+1)}-\vey^{(i)}=\alpha_i\veg^{(i)}$ for some $\veg^{(i)}\in\GLP(A)$  and $\alpha_i>0$, and
\item $\vey^{(i)}+\alpha\veg^{(i)}$ is infeasible for all $\alpha>\alpha_i$. 
\end{enumerate}  
 The \emph{circuit distance} $\dist_\Circuits(\vev^{(1)},\vev^{(2)})$ from $\vev^{(1)}$ to $\vev^{(2)}$ then is the minimum length of a circuit walk from $\vev^{(1)}$ to $\vev^{(2)}$. 
 The \emph{circuit diameter} $\diam_\Circuits(P)$ of $P$ is the maximum circuit distance between any two vertices of $P$.
\end{definition} 

It should be noted that a circuit walk is not necessarily reversible, so we may have $\dist_\Circuits(\vev^{(1)},\vev^{(2)})\neq \dist_\Circuits(\vev^{(2)},\vev^{(1)})$. The following example demonstrates that this can indeed happen.

\begin{example}
  Consider the polyhedron $P=\set{\,\vez:A\vez\leq\veb\,}$ given by 
\[
	A=
		\left(\begin{array}{rr}
			-1 & 0\\ -1 & 1 \\ 0 & 1 \\ 1 & 1 \\ 1 & -1 \\ -1 & 1
		\end{array} \right)
	\quad \text{and} \quad
	\veb=
		\left(\begin{array}{r}
			0 \\ 1 \\2\\4\\6\\0
		\end{array} \right).
\]
$P$ is a two-dimensional polytope with six vertices, whose circuits are given by 
\[
  \GLP(A)=\left\{
 	  \pm \left(\begin{array}{r} 1\\0 \end{array} \right), 
 	  \pm \left(\begin{array}{r} 0\\1 \end{array} \right),  
 	  \pm \left(\begin{array}{r} 1\\1 \end{array} \right), 
 	  \pm \left(\begin{array}{r} 1\\-1 \end{array} \right)
 	\right\}.
\]
\noindent Now let us have a look at the circuit distances $\dist_\Circuits(\vev^{(1)},\vev^{(4)})$ and $\dist_\Circuits(\vev^{(4)},\vev^{(1)})$:
 
\definecolor{polytopeColor}{gray}{0.9}
	\begin{figure}[H]
		\centering
\scalebox{.75}{
			\begin{tikzpicture}[scale=1]
\useasboundingbox (-1,-4) rectangle (6,3) ;
					\coordinate (v1) at (0,1);
					\coordinate (v2) at (1,2);
					\coordinate (v3) at (2,2);
					\coordinate (v4) at (5,-1);
					\coordinate (v5) at (3,-3);
					\coordinate (v6) at (0,0);
					\draw[black, fill= polytopeColor] (v1)--(v2)--(v3)--(v4)--(v5)--(v6)--(v1);					
					\draw [fill, black] (v1) circle [radius=0.1];
					\draw [fill, black] (v2) circle [radius=0.1];
					\draw [fill, black] (v3) circle [radius=0.1];
					\draw [fill, black] (v4) circle [radius=0.1];
					\draw [fill, black] (v5) circle [radius=0.1];
					\draw [fill, black] (v6) circle [radius=0.1];
					\node[left] at (v1) {$\vev^{(1)}$};
					\node[above left] at (v2) {$\vev^{(2)}$};
					\node[above right] at (v3) {$\vev^{(3)}$};
					\node[right] at (v4) {$\vev^{(4)}$};
					\node[below] at (v5) {$\vev^{(5)}$};
					\node[below left] at (v6) {$\vev^{(6)}$};
					\draw[line width= 1.5, red, ->] (v1)--(3,1);
					\draw[line width= 1.5, red, ->] (3,1)--(v4);
					\foreach \x in {0,1,...,5}{
        		\foreach \y in {-3,-2,...,2}{
				 			\draw [fill, black] (\x,\y) circle [radius=0.02];
						}
					}
			\end{tikzpicture} 	
\qquad
			\begin{tikzpicture}[scale=1]
\useasboundingbox (-1,-4) rectangle (6,3) ;
					\coordinate (v1) at (0,1);
					\coordinate (v2) at (1,2);
					\coordinate (v3) at (2,2);
					\coordinate (v4) at (5,-1);
					\coordinate (v5) at (3,-3);
					\coordinate (v6) at (0,0);
					\draw[black, fill= polytopeColor] (v1)--(v2)--(v3)--(v4)--(v5)--(v6)--(v1);					
					\draw [fill, black] (v1) circle [radius=0.1];
					\draw [fill, black] (v2) circle [radius=0.1];
					\draw [fill, black] (v3) circle [radius=0.1];
					\draw [fill, black] (v4) circle [radius=0.1];
					\draw [fill, black] (v5) circle [radius=0.1];
					\draw [fill, black] (v6) circle [radius=0.1];
					\node[left] at (v1) {$\vev^{(1)}$};
					\node[above left] at (v2) {$\vev^{(2)}$};
					\node[above right] at (v3) {$\vev^{(3)}$};
					\node[right] at (v4) {$\vev^{(4)}$};
					\node[below] at (v5) {$\vev^{(5)}$};
					\node[below left] at (v6) {$\vev^{(6)}$};
					\draw[line width= 1.5, red, ->] (v4)--(v3);
					\draw[line width= 1.5, red, ->] (v4)--(1,-1);
					\draw[line width= 1.5, red, ->] (v4)--(v5);
					\foreach \x in {0,1,...,5}{
        		\foreach \y in {-3,-2,...,2}{
				 			\draw [fill, black] (\x,\y) circle [radius=0.02];
						}
					}
			\end{tikzpicture}
} 	
	\end{figure}
  We have $\dist_\Circuits(\vev^{(1)},\vev^{(4)})=2$, but $\dist_\Circuits(\vev^{(4)},\vev^{(1)})=3$. No matter which circuit direction we choose for a first step starting at $\vev^{(4)}$, we cannot go to $\vev^{(1)}$ with only one more step.
\eoproof
\end{example}

The following example demonstrates that perturbing the right-hand side vector may not change the combinatorial structure of the polyhedron while changing the circuit diameter. Note that both polyhedra possess the same set of edge directions/circuits.

\begin{example}
  Consider the polyhedron $\tilde P=\set{\,\vez:A\vez\leq\tilde\veb\,}$ given by
\[
	A=
		\left(\begin{array}{rr}
			-1 & 0\\ -1 & 1 \\ 0 & 1 \\ 1 & 1 \\ 1 & -1 \\ -1 & 1
		\end{array} \right)
	\quad \text{and} \quad
	\tilde\veb=
		\left(\begin{array}{r}
			0 \\ 1 \\2\\4\\4\\0
		\end{array} \right).
\]
$P$ and $\tilde P$ have the same combinatorial structure:

\definecolor{polytopeColor}{gray}{0.9}
	\begin{figure}[H]
		\centering

\scalebox{.75}{
			\begin{tikzpicture}[scale=1]
\useasboundingbox (-1,-4) rectangle (6,3) ;
					\coordinate (v1) at (0,1);
					\coordinate (v2) at (1,2);
					\coordinate (v3) at (2,2);
					\coordinate (v4) at (5,-1);
					\coordinate (v5) at (3,-3);
					\coordinate (v6) at (0,0);
					\draw[black, fill= polytopeColor] (v1)--(v2)--(v3)--(v4)--(v5)--(v6)--(v1);					
					\draw [fill, black] (v1) circle [radius=0.1];
					\draw [fill, black] (v2) circle [radius=0.1];
					\draw [fill, black] (v3) circle [radius=0.1];
					\draw [fill, black] (v4) circle [radius=0.1];
					\draw [fill, black] (v5) circle [radius=0.1];
					\draw [fill, black] (v6) circle [radius=0.1];
					\node[left] at (v1) {$\vev^{(1)}$};
					\node[above left] at (v2) {$\vev^{(2)}$};
					\node[above right] at (v3) {$\vev^{(3)}$};
					\node[right] at (v4) {$\vev^{(4)}$};
					\node[below] at (v5) {$\vev^{(5)}$};
					\node[below left] at (v6) {$\vev^{(6)}$};
					\foreach \x in {0,1,...,5}{
        		\foreach \y in {-3,-2,...,2}{
				 			\draw [fill, black] (\x,\y) circle [radius=0.02];
						}
					}
			\end{tikzpicture} 	
\qquad
			\begin{tikzpicture}[scale=1]
\useasboundingbox (-1,-4) rectangle (6,3) ;
					\coordinate (v1) at (0,1);
					\coordinate (v2) at (1,2);
					\coordinate (v3) at (2,2);
					\coordinate (v4) at (4,0);
					\coordinate (v5) at (2,-2);
					\coordinate (v6) at (0,0);
					\draw[black, fill= polytopeColor] (v1)--(v2)--(v3)--(v4)--(v5)--(v6)--(v1);					
					\draw [fill, black] (v1) circle [radius=0.1];
					\draw [fill, black] (v2) circle [radius=0.1];
					\draw [fill, black] (v3) circle [radius=0.1];
					\draw [fill, black] (v4) circle [radius=0.1];
					\draw [fill, black] (v5) circle [radius=0.1];
					\draw [fill, black] (v6) circle [radius=0.1];
					\node[left] at (v1) {$\vev^{(1)}$};
					\node[above left] at (v2) {$\vev^{(2)}$};
					\node[above right] at (v3) {$\vev^{(3)}$};
					\node[right] at (v4) {$\vev^{(4)}$};
					\node[below] at (v5) {$\vev^{(5)}$};
					\node[below left] at (v6) {$\vev^{(6)}$};
					\foreach \x in {0,1,...,5}{
        		\foreach \y in {-3,-2,...,2}{
				 			\draw [fill, black] (\x,\y) circle [radius=0.02];
						}
					}
			\end{tikzpicture} 	
} 	
\end{figure}
It is not hard to check that $\diam_\Circuits(P)=3$ while $\diam_\Circuits(\tilde P)=2$. This indicates that perturbing the right-hand side may have effects on the circuit diameter that are hard to predict.
\eoproof
\end{example}

Clearly, the circuit diameter of a polyhedron is at most as large as the combinatorial diameter of the polyhedron, as a walk along the $1$-skeleton/edges of the polyhedron is a circuit walk. This raises the natural question whether the well-known Hirsch conjecture holds for the circuit diameter in place of the combinatorial diameter. Recall that there are counterexamples to the Hirsch conjecture bounding the combinatorial diameter of polyhedra and polytopes \cite{klee67,santos11}.

In the following section we consider the circuit diameter of dual transportation polyhedra defined on bipartite graphs $G=(V,E)$ (that are not necessarily complete). We show that their circuit diameter is bounded from above by $|V|-2$.

\section{Dual transportation polyhedra}

Let $G=(V,E)$ be a connected bipartite graph on node sets $V_1=\set{\,0,\ldots,M-1\,}$ and $V_2=\set{\,M,\ldots,M+N-1\,}$ with edges $E$ having one endpoint in $V_1$ and one endpoint in $V_2$. A dual transportation polyhedron associated to $G$ is given by some vector $\vecc\in\R^{|E|}$ via
\[
  P_{G,\vecc}=\set{\,\veu\in\R^{M+N}:-u_a+u_b\leq c_{ab}\ \forall\ a\in V_1, b\in V_2 \text{ and } ab\in E, u_0=0\,}.
\]
As is standard, we put $u_0=0$ to make $P_{G,\vecc}$ pointed. When we consider the circuit diameter of a specific polyhedron $P_{G,\vecc}$, we may assume that none of the inequalities $-u_a+u_b\leq c_{ab}$ is redundant (otherwise remove such an edge $ab$ from $G$, leaving the polyhedron the same but making the set of circuits smaller and thus the circuit diameter potentially bigger).

Moreover, we may assume that $P_{G,\vecc}$ is generic, although this merely simplifies the presentation. The vertices of $P_{G,\vecc}$ are determined by sets of inequalities $-u_a+u_b\leq c_{ab}$ that become tight. For $\veu\in P_{G,\vecc}$, we denote by $G(\veu)$ the graph with nodes $V$ and with edges $ab\in E$ for which $-u_a+u_b\leq c_{ab}$ is tight. For a vertex $\veu$ of $P_{G,\vecc}$, $G(\veu)$ is a spanning subgraph of $G$ which is always a spanning tree of $G$ if $P_{G,\vecc}$ is generic. (This can be proved on similar lines as in \cite{balinski84} for the complete bipartite graph.) For our proofs it will be enough to know that for each vertex of $P_{G,\vecc}$ there is a spanning tree of $G$ with edges corresponding to the inequalities $-u_a+u_b\leq c_{ab}$ that are tight at the vertex. This uniquely determines the vertex $\veu$, since we normalized $u_0=0$.

As in \cite{balinski84}, the possible edge directions of $P_{G,\vecc}$ can be described as follows: Let $R,S\subseteq V$ be connected nonempty node sets with $R\cup S=V$ and $R\cap S=\emptyset$. W.l.o.g., we may assume $0\in R$. Then the vector $\veg\in\R^{M+N}$ with
\begin{equation}\label{Eq: Construction of circuit direction from R and S.}
  g_i=\left\{\begin{array}{ll}0, & \text{if } i\in R,\\ 1, & \text{if } i\in S,\end{array}\right.
\end{equation}
is an edge direction of $P_{G,\vecc}$ for some right-hand side $\vecc$. In fact, it can be shown that these are all possible edge directions and hence they constitute the set of circuits, $\Circuits_G$, associated to the matrix defining the polyhedron $P_{G,\vecc}$. 

We are ready to present and prove the core part of our main result.

\begin{lemma}\label{Thm: circuit diameter bounded by V-1}
  The circuit diameter $\diam_\Circuits(P_{G,\vecc})$ of $P_{G,\vecc}$ is bounded from above by $|V|-1$.
\end{lemma}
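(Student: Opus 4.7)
The plan is to construct an explicit circuit walk from $\veu^{(1)}$ to $\veu^{(2)}$ of length at most $|V|-1$. Let $T^{(2)}\subseteq G$ be the spanning tree of $G$ whose edges correspond to the tight inequalities at $\veu^{(2)}$, rooted at node $0$. Since a vertex of $P_{G,\vecc}$ is uniquely determined by its spanning tree of tight edges together with $u_0=0$, it suffices to produce a sequence of circuit steps after which every edge of $T^{(2)}$ is tight; at that point we have arrived at $\veu^{(2)}$. The strategy is to perform one step per edge of $T^{(2)}$ not already tight at $\veu^{(1)}$, with each step making one more edge of $T^{(2)}$ tight while preserving the edges made tight in earlier steps.

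For each non-root node $v$ of $T^{(2)}$, let $T^{(2)}_v$ be the subtree of $T^{(2)}$ rooted at $v$ and let $p$ denote the parent of $v$ in $T^{(2)}$. Since $V(T^{(2)}_v)$ and its complement are both connected in $T^{(2)}$ and hence in $G$, the vector $\pm\mathbf{1}_{V(T^{(2)}_v)}$ is a circuit of $\Circuits_G$ by the characterization in~\eqref{Eq: Construction of circuit direction from R and S.}. A step in this direction translates $T^{(2)}_v$ rigidly, so the tightness of every edge of $T^{(2)}$ whose endpoints both lie inside $V(T^{(2)}_v)$ or both lie outside is preserved; the unique tree edge that crosses the partition is $(p,v)$. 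I would process the non-root nodes of $T^{(2)}$ in post-order (children strictly before their parent). When $v$'s turn comes, every edge of $T^{(2)}$ strictly inside $T^{(2)}_v$ is already tight while no ancestor of $v$ has yet been moved; a short telescoping computation then shows that at the current intermediate point $v$ sits at $u^{(1)}_v$ and $p$ sits at $u^{(1)}_p$. Therefore the step size needed to make the edge $(p,v)$ tight equals the slack of $(p,v)$ at $\veu^{(1)}$, which is nonnegative. Steps whose slack is zero --- those corresponding to edges $(p,v)\in T^{(1)}$ --- are simply skipped, so the constructed walk uses at most $|T^{(2)}\setminus T^{(1)}|\leq |V|-1$ positive-length steps.

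The main obstacle is to verify condition~(3) of the circuit-walk definition, i.e., that each step proceeds until infeasibility. For this, I need to show that among all edges $ab\in E$ crossing the partition $(V\setminus V(T^{(2)}_v),\,V(T^{(2)}_v))$, the edge $(p,v)$ is the first to become tight as we move in the chosen direction; equivalently, $(p,v)$ has the smallest current slack at the intermediate point $\vey^{(k-1)}$ among such crossing edges. The post-order bookkeeping yields the explicit formula $y^{(k-1)}_b = u^{(2)}_b + (u^{(1)}_v - u^{(2)}_v)$ for every $b\in V(T^{(2)}_v)$, while for $a\notin V(T^{(2)}_v)$ we have $y^{(k-1)}_a\in\{u^{(1)}_a,u^{(2)}_a\}$ according to whether $a$ has already been processed. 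Comparing slacks via these formulas, using the feasibility of the non-tree edges at $\veu^{(2)}$ together with the residual freedom to choose the sign of $\mathbf{1}_{V(T^{(2)}_v)}$ and the order in which sibling subtrees are visited during the post-order traversal, is the technically delicate part of the argument. Once the maximality of each step is secured, the bound of at most $|V|-1$ steps on the constructed walk gives the desired bound on $\diam_\Circuits(P_{G,\vecc})$.
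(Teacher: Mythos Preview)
Your scheme breaks down at the maximality condition~(3), and the ``freedoms'' you invoke cannot rescue it. Consider the very first step of your post-order walk: $v$ is a leaf of $T^{(2)}$, you are at $\vey^{(0)}=\veu^{(1)}$, and (assuming the parent edge $pv\notin T^{(1)}$, otherwise you skip) you must move $u_v$ alone in the unique direction that decreases the slack of $pv$. But every edge of $G$ incident to $v$ lies on the same side of the bipartition, so moving $u_v$ in that direction decreases the slack of \emph{all} edges at $v$ simultaneously. Since $T^{(1)}$ is spanning, $v$ has at least one $T^{(1)}$-edge $qv$ with $q\neq p$, and its slack at $\veu^{(1)}$ is already zero; hence the maximal feasible step length is $\alpha=0$, contradicting $\alpha_i>0$. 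The sign of $\mathbf 1_{V(T^{(2)}_v)}$ is therefore not free---it is forced by which side of the bipartition $v$ lies on---and reordering sibling subtrees only changes \emph{which} leaf you start with, not the fact that every non-skipped leaf is blocked in this way. More generally, your formula $y^{(k-1)}_a\in\{u^{(1)}_a,u^{(2)}_a\}$ for $a\notin V(T^{(2)}_v)$ is also incorrect: if $a$ lies in a sibling subtree already processed at some level $p_i$ above $v$, then $y^{(k-1)}_a=u^{(2)}_a+(u^{(1)}_{p_i}-u^{(2)}_{p_i})$, which is neither value in general.

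The paper's argument avoids both pitfalls by making two different choices. First, it grows the set of tight $T^{(2)}$-edges outward from node $0$ (so the component $C$ on which $\vey$ already agrees with $\veu^{(2)}$ is always connected and contains $0$), rather than inward via post-order. Second---and this is the crucial point---it does \emph{not} use the partition $\bigl(V\setminus V(T^{(2)}_v),\,V(T^{(2)}_v)\bigr)$. Instead, when adding the edge $rs$ with $r\in C$, it places into $R$ not only $C$ but also every node of $V_2\setminus\{s\}$ adjacent to $C$, so that the only crossing edges whose slack decreases are of the form $as$ with $a\in C$. Since $y_a=u^{(2)}_a$ for $a\in C$, comparing current slacks reduces to comparing slacks at $\veu^{(2)}$, where $rs\in T^{(2)}$ is tight and all competitors are strictly slack; this is what forces $rs$ to be the first (and only) edge to become tight. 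Your subtree partition admits crossing edges whose outside endpoint is nowhere near $C$, and you have no control over the current value there.
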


\begin{proof}
  Let $\veu^{(1)}$ and $\veu^{(2)}$ be two vertices of $P_{G,\vecc}$ given by the spanning trees $T_1=G(\veu^{(1)})$ and $T_2=G(\veu^{(2)})$ of $G$. We will show how to construct a circuit walk $\veu^{(1)}=\vey^{(0)},\ldots,\vey^{(k)}=\veu^{(2)}$, such that $G(\vey^{(i)})$ has at least $i$ edges in common with $T_2$. This immediately implies $k\leq |V|-1$ which proves the claim.

It should be noted that the subgraphs $G(\vey^{(i)})$ may not be connected, since our circuit walk possibly goes through the interior of $P_{G,\vecc}$ along (potential) edge directions and thus may enter the interior of higher-dimensional faces of $P_{G,\vecc}$.

Given the feasible point $\vey^{(i)}\neq\veu^{(2)}$, let $C=G(V(C),E(C))$ be the connected component of $(V,E(G(\vey^{(i)}))\cap E(T_2))$ containing the node $0$. Possibly, $C$ consists only of the node $0$. As $\vey^{(i)}\neq\veu^{(2)}$, we must have $C\neq T_2$ and thus there is some node $s\in V$ which is not in $C$, but which is connected to $C$ directly via some edge $rs$ in $T_2$. We now construct an edge direction $\veg$ from $\Circuits_G$ such that $\vey^{(i+1)}:=\vey^{(i)}+\alpha\veg$ arises from a maximal length step along $\veg$ and such that $(V,E(G(\vey^{(i+1)}))\cap E(T_2))$ contains $C$ and the edge $rs$ from $T_2$. Starting from $\vey^{(0)}$ and repeating this process iteratively, we see that $G(\vey^{(i)})$ has at least $i$ edges in common with $T_2$, implying the result.

To construct $\veg$, we need to define $R,S\subseteq V$ that describe the edge direction from $\Circuits_G$. W.l.o.g.\ we will assume that $s\in V_2$. The case $s\in V_1$ works analogously by merely switching the roles of $V_1$ and $V_2$ and hence by switching the roles of $\epsilon\veg$ and $-\epsilon\veg$ below.

\begin{itemize}
  \item[(a)] All nodes from $C$ are assigned to $R$.
  \item[(b)] All nodes from $V_2\setminus\set{\,s\,}$ which are connected to $C$ by an edge in $E$, are assigned to $R$.
  \item[(c)] All nodes $t\in V\setminus R$ that are connected to $s$ by a path in $G$ are assigned to $S$.
  \item[(d)] All remaining nodes are assigned to $R$.
\end{itemize}

As $G$ is connected, this construction leads to sets $R$ and $S$ that are nonempty and that define connected components of $G$ that are connected by an edge $rs\in E$. Hence, $R$ and $S$ define an element $\veg\in\Circuits_G$ via Equation (\ref{Eq: Construction of circuit direction from R and S.}). We wish to include the edge $rs$ into our graph, that is, we wish to make the inequality $-u_r+u_s\leq c_{rs}$ tight at $\vey^{(i+1)}$, that is, we wish to increase the component $y^{(i)}_s$ ($s\in V_2$ and as $s\not\in V(C)$). Hence we \emph{add} $\epsilon\veg$ to $\vey^{(i)}$. (If $s\in V_1$, we \emph{subtract} $\epsilon\veg$ from $\vey^{(i)}$.) We choose as $\epsilon$ the smallest nonnegative number such that an inequality $-u_a+u_b\leq c_{ab}$ with $a\in R$ and $b\in S$ becomes tight. Note that $\epsilon=0$ is not excluded, but we show that this will never happen. In fact, we show that the edge $ab$ (on which $-u_a+u_b\leq c_{ab}$ becomes tight) is exactly the edge $rs$ that we wish to include.

Assume now on the contrary that $ab\neq rs$. Note that by construction at steps (b) and (c) we must have $b=s$, as all edges from $R$ to $S\cap V_2$ have $s$ as common end point and these are exactly the edges on which an inequality may become tight when walking along direction $\veg\in\Circuits_G$. Hence we must have $a\neq r$. As $G(\vey^{(i)}+\epsilon\veg)$ and $T_2$ coincide on the edges in $C$ and since $0\in V(C)$, $\vey^{(i+1)}:=\vey^{(i)}+\epsilon\veg$ and $\veu^{(2)}$ agree in their components in $V(C)$, that is, $u^{(2)}_c=y^{(i+1)}_c$ for all $c\in V(C)$. Since $\vey^{(i+1)}\in P_{G,\vecc}$ and since $as\in E(G(\vey^{(i+1)}))$ and $rs\notin E(G(\vey^{(i+1)}))$, we have 
\[
  -u^{(2)}_a+y^{(i+1)}_s=c_{as}\text{ but } -u^{(2)}_r+y^{(i+1)}_s<c_{rs}.
\] 
On the other hand, since $\veu^{(2)}\in P_{G,\vecc}$ and since $as\notin E(T_2)$ and $rs\in E(T_2)$, we have 
\[
  -u^{(2)}_a+u^{(2)}_s<c_{as}\text{ but } -u^{(2)}_r+u^{(2)}_s=c_{rs}.
\]
From $-u^{(2)}_a+y^{(i+1)}_s=c_{as}$ and $-u^{(2)}_a+u^{(2)}_s<c_{as}$ we conclude $y^{(i+1)}_s>u^{(2)}_s$, whereas $-u^{(2)}_r+y^{(i+1)}_s<c_{rs}$ and $-u^{(2)}_r+u^{(2)}_s=c_{rs}$ imply $y^{(i+1)}_s<u^{(2)}_s$. This contradiction shows $a=r$ and the claim is proved. \eoproof
\end{proof}

We can strengthen this result by observing the following fact on the vertices of $P_{G,\vecc}$.

\begin{lemma}
  Let $\veu^{(1)}$ and $\veu^{(2)}$ be two vertices of $P_{G,\vecc}$ given by the spanning trees $T_1$ and $T_2$ of $G$. Then $E(T_1)\cap E(T_2)\neq\emptyset$.
\end{lemma}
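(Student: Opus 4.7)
The plan is to argue by contradiction. Suppose $E(T_1)\cap E(T_2)=\emptyset$ and set $\vew=\veu^{(2)}-\veu^{(1)}$; the goal is to derive a sign pattern on the components of $\vew$ that cannot be realized on any vertex set.

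First, I would establish the sign structure of $\vew$ along the two spanning trees. For any $ab\in T_1$ with $a\in V_1$, $b\in V_2$, tightness at $\veu^{(1)}$ combined with feasibility at $\veu^{(2)}$ gives $-w_a+w_b\le 0$. Because $ab\notin T_2$ by the disjointness assumption, the constraint $-u_a+u_b\le c_{ab}$ holds strictly at $\veu^{(2)}$ under the genericity hypothesis already in force, so in fact $w_b<w_a$. An entirely symmetric argument on the roles of $T_1$ and $T_2$ shows $w_b>w_a$ for every $ab\in T_2$. Hence $\vew$ strictly decreases from its $V_1$-endpoint to its $V_2$-endpoint along every edge of $T_1$ and strictly increases along every edge of $T_2$.

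Next, I would apply an extremal-value argument. Pick $v^*\in V$ that maximizes $w_{v^*}$; since $T_1$ and $T_2$ are spanning trees of the connected graph $G$ on $M+N\ge 2$ nodes, $v^*$ is incident to at least one edge in each of $T_1$ and $T_2$. If $v^*\in V_1$, take an incident $T_2$-edge $v^*b$ with $b\in V_2$; the sign pattern forces $w_b>w_{v^*}$, contradicting the choice of $v^*$. If $v^*\in V_2$, an incident $T_1$-edge $av^*$ yields $w_a>w_{v^*}$ in the same way. Either case is impossible, so the assumption $E(T_1)\cap E(T_2)=\emptyset$ must fail.

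The main obstacle is really the first step: the argument requires \emph{strict} inequalities along every edge of $T_1\cup T_2$, since the extremal-value contradiction collapses if the tree differences are allowed to vanish. This is exactly what the genericity assumption stated at the beginning of the section provides, by forcing each vertex to have tight constraints only along its associated spanning tree. Once strictness is secured, the rest of the argument is a clean max-principle applied to the bipartite orientation of $T_1$ versus $T_2$.
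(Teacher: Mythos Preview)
Your argument is correct and rests on the same key sign observation as the paper: under the disjointness assumption, the difference $\vew=\veu^{(2)}-\veu^{(1)}$ strictly decreases across each $T_1$-edge (from its $V_1$-end to its $V_2$-end) and strictly increases across each $T_2$-edge. The paper reaches this point by translating so that $\veu^{(1)}=\ve 0$ and then reading off the inequalities for $\veu^{(2)}$, which is exactly your $\vew$.

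Where you diverge is in how the contradiction is extracted. The paper builds an alternating walk: a $T_1$-edge, then a $T_2$-edge meeting it, then a $T_1$-edge again, and so on, obtaining a strictly decreasing chain of $\vew$-values; finiteness forces a repeated node and hence a cycle with $w_i>\cdots>w_i$. You instead take the global maximizer $v^*$ of $\vew$ and use a single incident edge of the appropriate tree to jump strictly above it. Your route is shorter and avoids the bookkeeping of the alternating construction; the paper's route has the minor advantage of making the role of connectivity of $T_1$ and $T_2$ very explicit at each step. Both rely on genericity in the same way, namely to guarantee that an edge outside $T_i$ gives a \emph{strict} inequality at $\veu^{(i)}$, and you flag this correctly.
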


\begin{proof}
As we can translate $P_{G,\vecc}$, we may assume w.l.o.g.{} that $\veu^{(1)}=\ve 0$. Clearly, this mere shift does not change any structure, in particular, it does not change $T_1=G(\veu^{(1)})$ and $T_2=G(\veu^{(2)})$. For better readability, let us denote the components of $\veu^{(i)}$, $i=1,2$, belonging to $V_1$ and $V_2$, respectively, by $\vev^{(i)}$ and $\vew^{(i)}$. Now assume that $E(T_1)\cap E(T_2)=\emptyset$. 

As $T_1$ is connected, there must be an edge $(v_1,w_1)\in T_1$. As $(v_1,w_1)\notin T_2$, we have $-v_1^{(2)}+w_1^{(2)}<-v_1^{(1)}+w_1^{(1)}=0$ and hence $w_1^{(2)}<v_1^{(2)}$.

As $T_2$ is connected, there must be an edge $(v_2,w_1)\in T_2$. As $(v_2,w_1)\notin T_1$, we must have $0=-v_2^{(1)}+w_1^{(1)}<-v_2^{(2)}+w_1^{(2)}$ and hence $v_2^{(2)}<w_1^{(2)}$.

Again, as $T_1$ is connected, there must be an edge $(v_2,w_2)\in T_1$. As $(v_2,w_2)\notin T_2$, we have $-v_2^{(2)}+w_2^{(2)}<-v_2^{(1)}+w_2^{(1)}=0$ and hence $w_2^{(2)}<v_2^{(2)}$. 
	\begin{figure}[H]
		\centering
			\begin{tikzpicture}[scale=0.35]
					\coordinate (v1) at (0,8);
					\coordinate (v2) at (0,6);
					\coordinate (v3) at (0,4);
					\coordinate (v4) at (0,2);
					\coordinate (w1) at (5,8);
					\coordinate (w2) at (5,6);
					\coordinate (w3) at (5,4);
					\coordinate (w4) at (5,2);
					
					\draw [fill, black] (v1) circle [radius=0.08];
					\draw [fill, black] (v2) circle [radius=0.08];
					\draw [fill, black] (v3) circle [radius=0.08];
					\draw [fill, black] (v4) circle [radius=0.08];
					\draw [fill, black] (w1) circle [radius=0.08];
					\draw [fill, black] (w2) circle [radius=0.08];
					\draw [fill, black] (w3) circle [radius=0.08];
					\draw [fill, black] (w4) circle [radius=0.08];

					\node[left] at (v1) {$v_1$};
					\node[left] at (v2) {$v_2$};
					\node[left] at (v3) {$v_3$};
					\node[left] at (v4) {$v_4$};
					\node[right] at (w1) {$w_1$};	
					\node[right] at (w2) {$w_2$};
					\node[right] at (w3) {$w_3$};
					\node[right] at (w4) {$w_4$};
	
				  \draw (v1)--(w1);
					\draw[dotted] (v2)--(w1);
					\draw (v2)--(w2);
					\draw[dotted] (v3)--(w2);
					\draw (v3)--(w3);
					\draw[dotted] (v4)--(w3);
					\draw (v4)--(w4);
					\draw[dotted] (v2)--(w4);
					
					\draw (12,8)--(14,8);
					\node[right] at (14,8) {edge $\in T_1$};
					\draw[dotted]  (12,7)--(14,7);
					\node[right]  at (14,7) {edge $\in T_2$};
			\end{tikzpicture} 	
	\end{figure}
Continuing like this, we create a path with edges alternately from $T_1\setminus T_2$ and $T_2\setminus T_1$. As there are only finitely many nodes, eventually some $v_i$ (or $w_j$) is selected a second time and we close a cycle. But then we have that 
\[
  v_i^{(2)}>w_i^{(2)}>v_{i+1}^{(2)}>\ldots>v_k^{(2)}=v_i^{(2)} 
\]
(or $w_j^{(2)}>\ldots>w_j^{(2)}$), a contradiction. Hence we must have $E(T_1)\cap E(T_2)\neq\emptyset$. \eoproof
\end{proof}

This now implies the following strengthening of Lemma \ref{Thm: circuit diameter bounded by V-1}.

\begin{theorem}
  The circuit diameter $\diam_\Circuits(P_{G,\vecc})$ of $P_{G,\vecc}$ is bounded from above by $|V|-2$.
\end{theorem}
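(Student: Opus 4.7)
My plan is to sharpen the proof of Lemma~\ref{Thm: circuit diameter bounded by V-1} by one unit, exploiting the preceding lemma on common edges of $T_1$ and $T_2$. First I would refine the bookkeeping used in Lemma~\ref{Thm: circuit diameter bounded by V-1}: let $C_i$ denote the connected component of the normalization node $0$ in the forest $(V,E(G(\vey^{(i)}))\cap E(T_2))$. Each iteration of that construction enlarges $C_i$ by at least one edge, namely the newly tightened tree edge $rs$; moreover, the edges already present in $C_i$ survive because both of their endpoints lie in $R$ (by rule (a) of the construction), so the corresponding tight constraints are preserved under the circuit step. An easy induction then gives $|E(C_i)|\geq |E(C_0)|+i$, so the walk terminates as soon as $C_k=T_2$ and hence $k\leq |V|-1-|E(C_0)|$.

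Next I would use the preceding lemma to guarantee $|E(C_0)|\geq 1$. That lemma produces an edge $ab\in E(T_1)\cap E(T_2)$, but this edge need not be incident to the normalization node $0$. To remedy this, observe that the role of $u_0=0$ is merely to factor out the lineality direction along $\mathbf{1}$ and make $P_{G,\vecc}$ pointed; fixing $u_{v^*}=0$ for any other node $v^*\in V$ yields a polyhedron that differs from $P_{G,\vecc}$ only by a translation along $\mathbf{1}$, has the same combinatorial structure, and possesses the same set of circuits $\Circuits_G$ (up to the shift making representatives satisfy $g_{v^*}=0$). The circuit diameter is therefore invariant under the choice of root, and we may assume without loss of generality that $0\in\{a,b\}$, so that the common edge $ab$ lies in the initial component $C_0$.

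Combining these two observations gives a circuit walk from $\veu^{(1)}$ to $\veu^{(2)}$ of length at most $|V|-1-|E(C_0)|\leq |V|-2$, as required. The main delicate point is verifying that the initial common edges survive the construction: by induction on $i$ one has $V(C_0)\subseteq V(C_i)\subseteq R$, so the constraints associated with the edges of $C_0$ are unaffected by the circuit step and remain tight, keeping those edges inside $C_{i+1}$. The re-rooting argument itself is harmless because a circuit walk of length $k$ in the re-normalized polyhedron translates directly into a circuit walk of length $k$ in $P_{G,\vecc}$ between the corresponding vertices.
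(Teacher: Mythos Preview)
Your proposal is correct and follows essentially the same approach as the paper: both invoke the preceding lemma to obtain a common edge of $T_1$ and $T_2$, observe that one may relabel the root so that this edge is incident to node $0$, and then rerun the construction of Lemma~\ref{Thm: circuit diameter bounded by V-1} with the initial component $C_0$ already containing one edge, saving one step. Your write-up is more explicit than the paper's (in particular you spell out why the re-rooting is harmless and why edges of $C_i$ survive the circuit step), but the underlying argument is the same.
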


\begin{proof}
  The proof is analogous to the proof of Lemma \ref{Thm: circuit diameter bounded by V-1}. We merely have to observe that w.l.o.g.{} we may assume that the edge that is common to $G(\veu^{(1)})$ and to $G(\veu^{(2)})$ has $0$ as one of its endpoints. So 
we only have to add at most $|V|-2$ edges in at most $|V|-2$ steps. \eoproof
\end{proof}

\section*{Acknowledgments} 

The second author gratefully acknowledges the support from the graduate program TopMath of the Elite Network of Bavaria and the TopMath Graduate Center of TUM Graduate School at Technische Universit\"at M\"unchen.

\bibliography{biblioAugmentation}
\bibliographystyle{plain}

\end{document}